\newtheorem{thm}{Theorem}[section]
\newtheorem{lem}[thm]{Lemma}
\theoremstyle{definition}
\newtheorem{defn}[thm]{Definition}
\theoremstyle{remark}
\numberwithin{equation}{section}
\newcommand{\R}{\mathbb R}
\newcommand{\eps}{\varepsilon}
\newcommand{\p}{\partial}
\newcommand{\comment}[1]{}
\begin{document}

\title[A note on interior estimates]{A note on interior $W^{2,1+ \eps}$ estimates for the Monge-Amp\`ere equation}
\author{G. De Philippis}
\address{Scuola Normale Superiore di Pisa, 56126 Pisa, Italy}
\email{\tt  guido.dephilippis@sns.it}

\author{A. Figalli}
\address{Department of Mathematics, The University of Texas at Austin, Austin, TX 78712 USA}
\email{\tt  figalli@math.utexas.edu}

\author{O. Savin}
\address{Department of Mathematics, Columbia University, New York, NY 10027 USA}
\email{\tt  savin@math.columbia.edu}

\thanks{A. Figalli was partially supported by NSF grant 0969962.
O. Savin was partially supported by NSF grant 0701037.}

\begin{abstract}
By a variant of the techniques introduced by the first two authors in \cite{DF}
to prove that second derivatives of solutions to the Monge-Amp\`ere equation are locally in $L\log L$,
we obtain interior $W^{2,1+\eps}$ estimates.

\end{abstract}
\maketitle

\section{Introduction}

Interior $W^{2,p}$ estimates for solutions to the Monge-Amp\`ere equation with bounded right hand side  
\begin{equation}\label{eq}
\det D^2 u = f \quad \mbox{in $\Omega$}, \quad u=0 \quad \mbox{on $\p \Omega$,} \quad 0< \lambda \le f \le \Lambda,
\end{equation} 
were obtained by Caffarelli in \cite{C} under the assumption that $|f-1|\leq \eps(p)$ locally. In particular $u \in W^{2,p}_{\rm loc}$ for any $p < \infty$ if $f$ is continuous. 

Whenever $f$ has large oscillation, $W^{2,p}$ estimates are not expected to hold for large values of $p$. Indeed
Wang showed in \cite{W} that for any $p>1$ there are homogenous solutions to \eqref{eq} of the type
$$u(tx, t^\alpha y) =t^{1+\alpha} u(x, y) \quad \mbox{for $t > 0$,}$$
 which are not in $W^{2,p}$.

Recently the first two authors, motivated by a problem arising from the semigeostrophic equation \cite{ACDF,ACDF2},
 showed that interior $W^{2,1}$ estimates hold for the equation \eqref{eq} \cite{DF}. In fact they proved higher integrability in the sense that 
 $$\|D^2 u\| \left |\log \|D^2 u\|\right |^k \in L^1_{\rm loc} \quad \quad \forall\, k \ge 0.$$ 

In this short note we obtain interior $W^{2,1+\eps}$ estimates for some small
$\eps=\eps(n,\lambda,\Lambda)>0$. In view of the examples in \cite{W} this result is optimal.
We use the same ideas as in \cite{DF}, which mainly consist in looking to the $L^1$ norm of $\|D^2 u\|$
over the sections of $u$ itself and prove some decay estimates. Below we give the precise statement.

\begin{thm}\label{thm}
Let $u: \overline \Omega \to \R$,
\begin{equation*}
u=0 \quad \mbox{on $\p \Omega$}, \quad \quad B_1\subset \Omega \subset B_n,
\end{equation*}
be a continuous convex solution to the Monge-Amp\`ere equation
\begin{equation}\label{MA}
\det D^2 u=f(x) \quad \mbox{in $\Omega$}, \quad \quad 0< \lambda \le f \le \Lambda,
\end{equation}
for some positive constants $\lambda$, $\Lambda$. Then 
$$\|u\|_{W^{2,1+\eps}(\Omega')} \le C, \quad \quad \mbox {with} \quad \Omega':=\{ u< -\|u\|_{L^\infty}/2\} ,$$
where $\eps,C>0$ are universal constants depending on $n$, $\lambda$, and $\Lambda$ only.
\end{thm}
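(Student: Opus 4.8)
The plan is to reduce the estimate to an affine-invariant statement about the sections of $u$, to recall the $L^1$ bound on sections proved in \cite{DF}, and to upgrade it to an $L^{1+\eps}$ bound through a Calder\'on--Zygmund-type iteration over sections in which each dyadic scale contributes a genuine geometric gain.

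Since $B_1\subset\Omega\subset B_n$ and $\lambda\le f\le\Lambda$, a comparison with radial barriers gives $\|u\|_{L^\infty}\sim 1$, so $u$ is a normalized solution (in the sense of \cite{C}) on its maximal section $\Omega=\{u<0\}$, and $\Omega'=\{u<-\|u\|_{L^\infty}/2\}$ is precisely the half-section $S_{1/2}$ centered at the minimum point of $u$; by Caffarelli's regularity theory $\Omega'\Subset\Omega$ and $\Omega'$ is itself a normalized section. Because $u$ is convex, at a.e.\ point one has $\|D^2u\|\le\Delta u\le n\|D^2u\|$, where $\Delta u$ denotes the density of the nonnegative measure $\Delta u$ (which exists a.e.\ by Aleksandrov's theorem); moreover the $C^{1,\alpha}$ estimates of \cite{C} make the divergence theorem available on sections. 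Hence it suffices to bound $\int_{\Omega'}(\Delta u)^{1+\eps}$.

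For this I would first record \cite{DF} in affine-invariant form: for every section $S=S(x_0,t_0)\Subset\Omega$, normalized by an affine map $T$ with $B_1\subset T(S)\subset B_n$, one has $\int_{S_{1/2}}\Delta u\,dx\le C\,|S_{1/2}|$, which follows by writing $\int_{S_t}\Delta u\,dx$ as the flux of $\nabla u$ through $\partial S_t$ and using the Lipschitz bound for the renormalized solution. Setting $a_k:=\int_{\{x\in\Omega':\,\Delta u(x)>M^k\}}\Delta u\,dx$, the crucial point is to produce universal constants $M>1$ and $\theta\in(0,1)$ with $a_{k+1}\le\theta\,a_k$ for all $k\ge0$. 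Granting this, $a_k\le\theta^k a_0$ with $a_0$ bounded by the $L^1$ estimate, so
$$\int_{\Omega'}(\Delta u)^{1+\eps}\le\sum_{k\ge0}M^{(k+1)\eps}\,a_k\le M^\eps a_0\sum_{k\ge0}(M^\eps\theta)^k<\infty$$
for every $\eps<\log(1/\theta)/\log M$, which proves the theorem.

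To obtain the recursion $a_{k+1}\le\theta a_k$ I would run a stopping-time/Vitali covering of the superlevel set $\{\Delta u>M^k\}\cap\Omega'$ by sections $S(x_i,r_i)$ of $u$, taken of balanced shape at the scale where the density of $\{\Delta u>M^k\}$ first drops to a fixed small value; on each such section $u$ renormalizes to a normalized solution, the $L^1$ bound applies, and --- since the renormalization of a balanced section of height $r$ rescales $\|D^2u\|$ only by a bounded factor --- the part of $\{\Delta u>M^{k+1}\}$ inside $S(x_i,r_i)$ carries only a fixed fraction of the mass carried there by $\{\Delta u>M^k\}$; summing over a disjoint subfamily, and using the engulfing property together with $\det D^2u\in[\lambda,\Lambda]$ to control overlaps and the total section volume, gives the claim. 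The main obstacle is exactly to secure the factor $\theta<1$ \emph{uniformly over all dyadic scales $k$}: in Caffarelli's classical $W^{2,p}$ theory such a decay (indeed with $\theta\to0$) is produced by comparing $u$ on small sections with the smooth solution of $\det D^2w=1$, but when $f$ only has bounded oscillation this comparison is unavailable --- which is precisely why the exponent cannot be large, in accordance with Wang's examples \cite{W}. The gain must therefore be extracted from the $L^1$ bound itself, applied to renormalized sub-sections of controlled geometry; once this is in place, the iteration and the passage to $\int_{\Omega'}(\Delta u)^{1+\eps}$ are routine.
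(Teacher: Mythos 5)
Your overall architecture is the same as the paper's first proof (truncated energies $a_k$ at dyadic levels $M^k$, a uniform geometric decay $a_{k+1}\le\theta a_k$, then summation/layer-cake to get $L^{1+\eps}$), and your reduction to a normalized section and the final summation are fine. But the decisive step --- the decay itself --- is not actually proved in your proposal, and you say so: ``the main obstacle is exactly to secure the factor $\theta<1$ uniformly over all dyadic scales.'' That obstacle is the theorem. Moreover, the mechanism you gesture at would not deliver it. Your stopping rule selects sections at the scale where the \emph{density} of $\{\Delta u>M^k\}$ drops below a fixed threshold, and you then assert that ``the renormalization of a balanced section of height $r$ rescales $\|D^2u\|$ only by a bounded factor.'' This is false in general: if $A$ is the affine map normalizing $S_h(x_0)$, then $\|D^2u(x)\|\le\|A\|^2\|D^2\tilde u(\tilde x)\|$, and $\|A\|^2$ (the ``normalized size'' of the section) is exactly the quantity that blows up where $D^2u$ is large --- if it were universally bounded, $u$ would already be $C^{1,1}$ and there would be nothing to prove. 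In addition, a density-based stopping time does not tie the geometry of the chosen section to the level $M^k$, so applying the $L^1$ bound on the renormalized section yields control of $\alpha^{-1}\int_{S_h}\|D^2u\|$ with an $\alpha$ unrelated to $M^k$, and no comparison between the mass at level $M^{k+1}$ and at level $M^k$ comes out.

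The paper's device, which is what is missing, is to choose for each $x\in D_{k+1}=\{\|D^2u\|\ge M^{k+1}\}\cap\overline{S_1}(0)$ the section $S_h(x)$ whose \emph{normalized size} is prescribed to be $\alpha=C_0M^k$; such an $h$ exists by continuity of the normalized size in $h$ (it tends to $\|D^2u(x)\|\ge M^{k+1}>\alpha$ as $h\to0$, after a harmless $C^2$ approximation, and is universally bounded at $h=\delta$). The basic lemma then provides a two-sided pinching, not just an upper bound: on a normalized section the flux estimate $\int\Delta u\le C$ forces a set of definite measure where $\|D^2u\|\le C$, the lower bound $\det D^2u\ge\lambda$ upgrades this to $C_0^{-1}I\le D^2u\le C_0I$ there, and the engulfing properties locate a definite fraction of this set inside $S_\delta(0)\cap S_t(y)$. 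After rescaling by $A$, this says $\int_{S_h(x_i)}\|D^2u\|\le C_0\alpha\,|\{C_0^{-1}\alpha\le\|D^2u\|\le C_0\alpha\}\cap S_{\delta h_i}(x_i)\cap S_1(0)|$, i.e.\ the right-hand side lives on the \emph{annulus} $\{M^k\le\|D^2u\|\le C_0^2M^k\}\subset D_k\setminus D_{k+1}$ once $M\ge C_0^2$, and on the pairwise disjoint cores $S_{\delta h_i}(x_i)$ of a Vitali cover. Summing gives $\int_{D_{k+1}}\|D^2u\|\le C\int_{D_k\setminus D_{k+1}}\|D^2u\|$, and adding $C\int_{D_{k+1}}\|D^2u\|$ to both sides yields the decay with $\tau=1/(1+C)$. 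Without the prescribed-normalized-size selection and the two-sided band (which is what confines the right-hand side to $D_k\setminus D_{k+1}$ and makes it absorbable), the iteration you outline cannot be closed, so the proposal has a genuine gap at its central step.
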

 By a standard covering argument (see for instance \cite[Proof of (3.1)]{DF}), this implies that $u \in W^{2,1+\eps}_{\rm loc}(\Omega)$.

Theorem \ref{thm} follows by slightly modifying the strategy in \cite{DF}: We use a 
covering lemma that is better localized (see Lemma \ref{basic}) to obtain a 
geometric decay of the ``truncated'' $L^1$ energy for $\|D^2 u\|$ (see Lemma \ref{log}).

 We also give a second proof of Theorem \ref{thm} based on the following observation:
In view of \cite{DF} the $L^1$ norm of $\|D^2 u\|$ decays on sets of small measure:
 $$|\{ \|D^2 u \| \ge M\}| \le \frac{C}{M\log M},$$
 for an appropriate universal constant $C>0$ and for any $M$ large.
 In particular, choosing first $M$ sufficiently large and then taking $\eps>0$ small enough,
we deduce (a localized version of) the bound
 $$
 |\{ \|D^2 u \| \ge M\}| \le \frac{1}{M^{1+\eps}} |\{ \|D^2 u \| \ge 1\}|
 $$
Applying this estimate at all scales (together with a covering lemma) leads to
the local $W^{2,1+ \eps}$ integrability for $\|D^2 u\|$.

We believe that both approaches are of interest, and for this reason we include both.
In particular, the first approach gives a direct proof of the $W^{2,1+\eps}_{\rm loc}$
regularity without passing through the $L\log L$ estimate.

We remark that the estimate of Theorem \ref{thm} holds under slightly weaker assumptions on the right hand side.
 Precisely if $$\det D^2 u =\mu$$
with $\mu$ being a finite combination of measures which are bounded between two multiples of a nonnegative polynomial,
then the $W^{2,1+\eps}_{\rm loc}$
regularity still holds (see Theorem \ref{thm2} for a precise statement).

The paper is organized as follows. In section \ref{sec:not} we introduce the notation and some basic properties of solution
to the Monge-Amp\`ere equation with bounded right hand side.
Then, in section \ref{sec:pf} we show both proofs of Theorem \ref{thm}, together with the extension to polynomial right hand sides.\\

After the writing of this paper was completed, we learned that Schmidt \cite{S} had just obtained the same result
with related but somehow different techniques.

\section{Notation and Preliminaries}\label{sec:not}

{\bf Notation.}
Given a convex function $u:\Omega \to \R$ with $\Omega \subset \R^n$ bounded and convex, we define its section $S_h(x_0)$ centered at $x_0$ at height $h$ as  $$S_h(x_0)=\{x \in \Omega \,: \quad u(x)<u(x_0)+\nabla u(x_0) \cdot (x-x_0)+h\}.$$ 
We also denote by $\overline{S_h}(x_0)$ the closure of $S_h(x_0)$.

The norm $\|A \|$ of an $n \times n$ matrix $A$ is defined as 
$$\|  A \|:=\sup_{|x| \le 1} Ax. $$

We denote by $|F|$ the Lebesgue measure of a measurable set $F$.

Positive constants depending on $n$, $\lambda$, $\Lambda$ are called {\it universal constants}.
In general we denote them by $c$, $C$, $c_i$, $C_i$.

\

Next we state some basic properties of solutions to \eqref{MA}.

\subsection{Scaling properties} 

If $S_h(x_0) \subset \subset \Omega,$
then (see for example \cite{C}) there exists a linear transformation $A:\R^n\to\R^n$, with $\det A=1$, such that
\begin{equation}\label{scal}
\sigma  B_{\sqrt h} \subset A(S_h(x_0)-x_0) \subset \sigma^{-1}  B_{\sqrt h},
\end{equation}
for some $\sigma>0$, small universal. 

\begin{defn}
We say that $S_h(x_0)$ has {\it normalized size} $\alpha$ if $$\alpha:=\|A\|^2$$ for some matrix
$A$ that satisfies the properties above. (Notice that, although $A$ may not be unique,
this definition fixes the value of $\alpha$ up to multiplicative universal constants.)
\end{defn}

It is not difficult to check that if $u$ is $C^2$ in a neighborhood of $x_0$,
then $S_h(x_0)$ has normalized size $\|D^2 u(x_0)\|$ for all small $h>0$ (if necessary we need to lower the value of $\sigma$).

Given a transformation $A$ as in \eqref{scal}, we define $\tilde u$ to be the  rescaling of $u$
\begin{equation}\label{tilde}
\tilde u(\tilde x)=h^{-1}u(x), \quad \quad \tilde x=Tx:=h^{-1/2}A \, (x-x_0).
\end{equation}
Then $\tilde u$ solves an equation in the same class 
$$\det D^2 \tilde u =\tilde f, \quad \quad \mbox{with} \quad \tilde f(\tilde x):=f(x), \quad \quad \lambda \le \tilde f \le \Lambda, $$
and the section $\tilde S_1(0)$ of $\tilde u$ at height 1 is normalized i.e $$ \sigma B_1 \subset \tilde S_1(0) \subset \sigma^{-1} B_1, \quad \quad \tilde S_1(0) = T (S_h(x_0)).$$
Also $$D^2u(x)=A^TD^2\tilde u(\tilde x)A,$$ hence
\begin{equation}\label{A1}
\|D^2u(x)\| \le \|A\|^2 \|D^2\tilde u(\tilde x)\|,
\end{equation}
and  
\begin{equation}\label{A2}
\gamma_1 I \le D^2 \tilde u(\tilde x) \le \gamma_2 I \quad \quad \Rightarrow \quad \quad\gamma_1 \|A\|^2  \le \|D^2  u( x)\| \le \gamma_2 \|A\|^2.
\end{equation}

\

\subsection{Properties of sections}\label{eng}

Caffarelli and Gutierrez showed in \cite{CG} that sections $S_h(x)$ which are compactly included in $\Omega$ have engulfing properties similar to the engulfing properties of balls. In particular we can find $\delta>0$ small universal such that:

1) If $h_1 \le h_2$ and $S_{\delta h_1}(x_1) \cap S_{\delta h_2}(x_2) \ne \emptyset $ then $$S_{\delta h_1}(x_1) \subset S_{h_2}(x_2).$$

2) If $h_1 \le h_2$ and $x_1 \in \overline{S_{h_2}}(x_2) $ then we can find a point $z$ such that
 $$S_{\delta h_1}(z) \subset S_{h_1}(x_1) \cap S_{h_2}(x_2).$$  
 
 3) If $x_1 \in \overline{S_{h_2}}(x_2) $ then $$ S_{\delta h_2}(x_1) \subset S_{2h_2}(x_2).$$
 Now we also state a covering lemma for sections.
\begin{lem}[Vitali covering]\label{l3}
Let $D$ be a compact set in $\Omega$ and assume that to each $x \in D$ we associate a corresponding section $S_h(x)\subset \subset \Omega$. Then we can find a finite number of these sections $S_{h_i}(x_i)$, $i=1,\dots,m$, such that $$D \subset \bigcup_{i=1}^m S_{ h_i}(x_i), \quad \quad \mbox{with $S_{\delta h_i}(x_i) $ disjoint.}$$
\end{lem}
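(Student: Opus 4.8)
\medskip
\noindent\textbf{Proof idea.} The plan is to run the classical Vitali selection argument, with the first engulfing property of Section~\ref{eng} playing the role that the triangle inequality plays for balls. The first point to record is that each center lies in its own shrunk section, $x\in S_{\delta h}(x)$: indeed at $x$ the affine function appearing in the definition of the section equals $u(x)$, so the defining inequality there reads $0<\delta h$, which holds. Since $u$ is continuous, each $S_{\delta h}(x)$ is open, so $\{S_{\delta h(x)}(x)\}_{x\in D}$ is an open cover of the compact set $D$; extract a finite subcover $S_{\delta g_1}(y_1),\dots,S_{\delta g_N}(y_N)$ with $y_j\in D$ and $g_j=h(y_j)$. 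This is the only place compactness of $D$ enters, and it is what produces a \emph{finite} family in the conclusion.

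Next I would relabel so that $g_1\ge g_2\ge\cdots\ge g_N$ and select greedily: put $1$ into an index set $I$, and for $j=2,\dots,N$ put $j$ into $I$ exactly when $S_{\delta g_j}(y_j)$ is disjoint from $S_{\delta g_i}(y_i)$ for every $i\in I$ chosen so far. By construction the sections $\{S_{\delta g_i}(y_i)\}_{i\in I}$ are pairwise disjoint; relabelled as $S_{\delta h_i}(x_i)$, $i=1,\dots,m$, these are the disjoint sections in the statement. For the covering, take $x\in D$ and pick $j$ with $x\in S_{\delta g_j}(y_j)$. If $j\in I$, then $x\in S_{\delta g_j}(y_j)\subset S_{g_j}(y_j)$ and we are done. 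If $j\notin I$, the selection rule gives some $i\in I$ with $i<j$ (hence $g_i\ge g_j$) and $S_{\delta g_j}(y_j)\cap S_{\delta g_i}(y_i)\ne\emptyset$; the first engulfing property, applied with $h_1=g_j\le h_2=g_i$, yields $S_{\delta g_j}(y_j)\subset S_{g_i}(y_i)$, so again $x$ lies in one of the selected full-height sections. Thus $D\subset\bigcup_{i=1}^m S_{h_i}(x_i)$.

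I expect the only genuine subtlety to be the choice, at the outset, to cover $D$ by the \emph{shrunk} sections $S_{\delta h(x)}(x)$ rather than by the $S_{h(x)}(x)$: this is legitimate precisely because each shrunk section contains its center, and it is exactly what makes the first engulfing property---which only ever places a shrunk section inside a full one---applicable verbatim in the covering step. Everything else is bookkeeping; in particular, the engulfing properties 2) and 3) and the scaling estimates are not needed here. One should also note in passing that $S_{\delta h}(x)\subset\subset\Omega$, since it is contained in $S_h(x)\subset\subset\Omega$, so that the engulfing property does apply to all the sections involved.
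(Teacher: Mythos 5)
Your proof is correct and follows essentially the same route as the paper's: cover $D$ by the shrunk sections $S_{\delta h}(x)$, extract a finite subcover by compactness, then greedily select a maximal disjoint subfamily in order of decreasing height, using engulfing property 1) to show each discarded shrunk section lies in a selected full-height section. The paper only sketches this (referring to Stein for details), and your write-up fills in exactly those steps.
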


The proof follows as in the standard case:
we first select by comptactness a finite number of sections $S_{\delta h_j}(x_j)$ which cover $D$,
and then choose a maximal disjoint set from these sections, selecting at each step
a section which has maximal height among the ones still available (see the proof of
\cite[Chapter 1, \S 3, Lemma 1]{St} for more details).

\section{Proof of  Theorem \ref{thm}}\label{sec:pf}
 We assume throughout that $u$ is a normalized solution in $S_1(0)$ in the sense that
 $$\det D^2 u=f \quad \mbox{in $\Omega$}, \quad \quad \lambda \le f \le \Lambda,$$
 and
 $$S_2(0) \subset \subset \Omega, \quad \quad \sigma B_1 \subset S_1(0) \subset \sigma^{-1}B_1.$$   
 In this section we show that 
 \begin{equation}\label{est}
 \int_{S_1(0)}\|D^2u\|^{1+\eps} dx \le C,
 \end{equation}
 for some universal constants $\eps>0$ small and $C$ large.
Then Theorem \ref{thm} easily follows from this estimate and a covering argument
based on the engulfing properties of sections.
Without loss of generality we may assume that $u \in C^2$, since the general case follows by approximation.

\subsection{A direct proof of Theorem \ref{thm}}
In this section we give a selfcontained proof of Theorem \ref{thm}.
As already mentioned in the introduction, the idea is to get
a geometric decay for $\int_{\{ \|D^2 u \| \ge M\}}\|D^2u\|$.

\begin{lem}\label{basic}
Assume $0\in \overline{S_t}(y) \subset \subset \Omega$ for some $t \ge 1$ and $y\in \Omega$. Then
$$\int_{S_1(0)} \|D^2 u\| dx \le C_0\, \left | \left \{C_0^{-1}I \le D^2u \le C_0 I \right \} \cap S_\delta(0) \cap S_t(y) \right|,$$
for some $C_0$ large universal.
\end{lem}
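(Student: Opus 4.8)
The strategy is to bound the left-hand side by the volume of $S_1(0)$ (which is comparable to the volume of the normalized section, hence universal) and then use the structure of the Monge--Amp\`ere equation to convert that volume bound into a bound on a measure-theoretic ``good set'' where $D^2u$ is pinched between two universal constants. First I would recall that $\|D^2u\| \le \Delta u = \operatorname{tr} D^2u$ up to dimensional constants (since $D^2u\ge 0$), so it suffices to estimate $\int_{S_1(0)}\Delta u\,dx$. By the divergence theorem this equals $\int_{\p S_1(0)} \p_\nu u$, and since $S_1(0)\subset\subset\Omega$ is a normalized section, $|u|$ and $|\nabla u|$ are bounded by universal constants on a slightly larger section (using convexity, the normalization $\sigma B_1\subset S_1(0)\subset\sigma^{-1}B_1$, and $S_2(0)\subset\subset\Omega$); combined with a universal bound on the perimeter of a section one more height unit in, this gives $\int_{S_1(0)}\|D^2u\|\,dx \le C$ for a universal $C$.

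Next I would produce the lower bound for the good set. The key fact is that $S_\delta(0)$ and $S_t(y)$ both contain a common section of definite size: by the engulfing property~2) of Caffarelli--Gutierrez (Subsection~\ref{eng}), since $0\in\overline{S_t}(y)$, there is a point $z$ with $S_{\delta'}(z)\subset S_\delta(0)\cap S_t(y)$ for a universal $\delta'$ (taking $h_1$ comparable to $1$, $h_2=t\ge 1$). So it is enough to show that inside a normalized section $S_{\delta'}(z)$ the set $\{C_0^{-1}I\le D^2u\le C_0 I\}$ has volume bounded below by a universal constant. This is a consequence of Caffarelli's $C^{1,\alpha}$ and interior $W^{2,p}$ theory applied after rescaling: after the affine rescaling \eqref{tilde} that normalizes the section, $\tilde u$ solves an equation in the same class on a normalized domain, the right-hand side satisfies $\lambda\le\tilde f\le\Lambda$, and on a definite interior set $D^2\tilde u$ is bounded above (by Pogorelov/$W^{2,p}$-type estimates) and bounded below (by $\det D^2\tilde u\ge\lambda$ together with the upper bound). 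Undoing the rescaling via \eqref{A2}, the pinching $\gamma_1 I\le D^2\tilde u\le \gamma_2 I$ on a set of definite measure transfers to $\gamma_1\|A\|^2\le\|D^2u\|\le\gamma_2\|A\|^2$ on a set whose measure is comparable (the Jacobian of the rescaling is a positive power of $\|A\|^2$, but on the normalized section $S_{\delta'}(z)$ the normalized size $\|A\|^2$ is itself comparable to $1$, so nothing degenerates). Hence the good set has universally bounded measure from below, and we may absorb all constants into $C_0$.

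The main obstacle I anticipate is bookkeeping the normalized-size parameter $\|A\|^2$ so that it does not appear in the final estimate: the left-hand integral scales like $\|A\|^2$ under \eqref{tilde} while the volume on the right scales differently, so the two sides are only directly comparable because we have restricted to the good set where $\|D^2u\|\sim\|A\|^2$, and because the common subsection $S_{\delta'}(z)$ has normalized size comparable to that of $S_1(0)$, i.e.\ to a universal constant. Making the constant $C_0$ large enough to simultaneously handle (a) the upper bound $C_0$ in the definition of the good set, (b) the lower bound $C_0^{-1}$ there, and (c) the ratio between $\int\|D^2u\|$ and the volume of the good set, is the content of the lemma; each ingredient is standard, but threading them together so that everything stays universal is where care is needed.
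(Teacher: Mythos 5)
Your first two steps coincide with the paper's proof: the bound $\int_{S_1(0)}\|D^2u\|\,dx\le\int_{S_1(0)}\Delta u\,dx=\int_{\p S_1(0)}u_\nu\le C_1$ via convexity and the interior Lipschitz estimate, and the lower bound $|S_\delta(0)\cap S_t(y)|\ge c_1$ via the engulfing property 2) applied to the point $0\in\overline{S_t}(y)$. The gap is in the key remaining step, namely producing a set of definite measure inside $S_\delta(0)\cap S_t(y)$ on which $C_0^{-1}I\le D^2u\le C_0I$. You justify it by rescaling a subsection and invoking Caffarelli's interior $W^{2,p}$ theory and Pogorelov-type estimates to get a pointwise bound on $D^2\tilde u$ on a definite interior set. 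Those tools do not apply here: Pogorelov requires a smooth right-hand side and Caffarelli's $W^{2,p}$ estimates require $f$ continuous or close to a constant, whereas in the present setting $f$ is only measurable with $\lambda\le f\le\Lambda$. In this generality no pointwise interior bound on $D^2u$ on a prescribed interior region holds (Wang's examples quoted in the introduction show that even $W^{2,p}$ integrability fails for large $p$), and if such estimates were available the theorem being proved would be an immediate consequence, so the appeal is essentially circular.

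The correct step is far more elementary and uses only what you already have: since $\int_{S_1(0)}\|D^2u\|\le C_1$, Chebyshev's inequality gives $\left|\{\|D^2u\|>2C_1c_1^{-1}\}\cap S_1(0)\right|\le c_1/2$, and since $S_\delta(0)\cap S_t(y)\subset S_1(0)$ has measure at least $c_1$, the set $\{\|D^2u\|\le 2C_1c_1^{-1}\}$ has measure at least $c_1/2$ inside $S_\delta(0)\cap S_t(y)$. On that set the eigenvalues of $D^2u$ are bounded above by $2C_1c_1^{-1}$, and the lower bound $\det D^2u\ge\lambda$ then forces each eigenvalue to be at least $\lambda\,(2C_1c_1^{-1})^{1-n}$, which yields the pinching $C_0^{-1}I\le D^2u\le C_0I$ for $C_0$ large universal; finally $\int_{S_1(0)}\|D^2u\|\le C_1\le C_0\,(c_1/2)$ after enlarging $C_0$, which is the stated inequality. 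Note also that no rescaling or normalized-size bookkeeping is needed in this lemma, so the concerns in your last paragraph dissolve; the affine rescaling enters only later, in Lemma \ref{basic_sc}.
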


\begin{proof} By convexity of $u$ we have
\begin{equation*}\label{inteq}
\int_{S_1(0)} \|D^2 u\| dx \le \int_{S_1(0)} \Delta u \, dx = \int_{\p S_1(0)} u_\nu  \le C_1,
\end{equation*}
where the last inequality follows from the interior  Lipschitz estimate of $u$ in $S_2(0)$. 
 
 The second property in Subsection \ref{eng} gives
$$
S_\delta(0) \cap S_t(y) \supset S_{\delta^2}(z) 
$$
for some point $z$, which implies that
$$|  S_\delta(0) \cap S_t(y)| \ge c_1 $$ for some $c_1>0$ universal. The last two inequalities show that the set
$$\left \{\|D^2 u\| \le 2C_1c_1^{-1} \right \} $$
has at least measure $c_1/2$ inside $S_\delta(0) \cap S_h(y)$. 

Finally, the lower bound on $\det D^2 u$ implies that
$$
C_0^{-1}I \le D^2u \le C_0I\qquad \text{inside $\{\|D^2 u\| \le 2C_1c_1^{-1} \}$,}
$$
and the conclusion follows provided that we choose $C_0$ sufficiently large.
\end{proof}

By rescaling we obtain:

\begin{lem}\label{basic_sc}
Assume $S_{2h}(x_0) \subset \subset \Omega$, and $x_0 \in \overline{S_t}(y)$ for some $t \ge h $.
If $$S_h(x_0) \  \mbox{ has normalized size $\alpha$},$$ then
$$\int_{S_h(x_0)} \|D^2 u\| \, dx \le C_0 \alpha\, \left | \left \{C_0^{-1} \alpha \le \|D^2u \| \le C_0 \alpha\right \} \cap S_{\delta h}(x_0) \cap S_t(y) \right|.$$

\end{lem}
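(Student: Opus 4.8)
The plan is to deduce Lemma \ref{basic_sc} from Lemma \ref{basic} purely by the affine rescaling machinery set up in Subsection 2.1. Let $A$ be a matrix realizing the normalized size $\alpha=\|A\|^2$ of $S_h(x_0)$, and let $\tilde u(\tilde x)=h^{-1}u(x)$, $\tilde x = Tx = h^{-1/2}A(x-x_0)$, be the associated rescaling as in \eqref{tilde}. Then $\tilde u$ is again a solution of the Monge-Amp\`ere equation in the same class, and $\tilde S_1(0)$ is a normalized section; moreover $T$ maps $S_h(x_0)$ to $\tilde S_1(0)$ and, more generally, maps the section $S_{sh}(x_0)$ of $u$ to the section $\tilde S_s(0)$ of $\tilde u$ for every $s>0$, and the section $S_t(y)$ of $u$ to the section $\tilde S_{t/h}(Ty)$ of $\tilde u$ (heights scale by $h^{-1}$ under the rescaling, since $\tilde u = h^{-1}u$ and gradients transform linearly). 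In particular the hypotheses $S_{2h}(x_0)\subset\subset\Omega$ and $x_0\in\overline{S_t}(y)$ with $t\ge h$ translate exactly into $\tilde S_2(0)\subset\subset\tilde\Omega$ and $0\in\overline{\tilde S_{t/h}}(Ty)$ with $t/h\ge 1$, which are precisely the hypotheses of Lemma \ref{basic} applied to $\tilde u$.

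The second step is to apply Lemma \ref{basic} to $\tilde u$, obtaining
\begin{equation*}
\int_{\tilde S_1(0)}\|D^2\tilde u\|\,d\tilde x \le C_0\left|\left\{C_0^{-1}I\le D^2\tilde u\le C_0 I\right\}\cap \tilde S_\delta(0)\cap \tilde S_{t/h}(Ty)\right|,
\end{equation*}
and then to change variables back via $\tilde x = Tx$. Since $\det A=1$, one has $\det T = h^{-n/2}$, so Lebesgue measure scales by $h^{-n/2}$ and $d\tilde x = h^{-n/2}dx$. For the left-hand side, \eqref{A1} type identities give $D^2 u(x) = A^T D^2\tilde u(\tilde x) A$, hence $\|D^2\tilde u(\tilde x)\| = \|A^{-T}D^2u(x)A^{-1}\|$, and since $\|A\|^2=\alpha$ and $\det A = 1$ the operator norm of $A^{-1}$ is comparable to $\|A\|$ up to the dimensional constants hidden in \eqref{scal}, so $\|D^2\tilde u(\tilde x)\|$ is comparable to $\alpha^{-1}\|D^2 u(x)\|$; combined with the Jacobian factor, $\int_{\tilde S_1(0)}\|D^2\tilde u\|\,d\tilde x$ is comparable to $\alpha^{-1}h^{-n/2}\int_{S_h(x_0)}\|D^2 u\|\,dx$. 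Similarly, on the right-hand side the pinching condition $C_0^{-1}I\le D^2\tilde u(\tilde x)\le C_0 I$ translates, via \eqref{A2} and the comparability $\|A\|^2=\alpha$, into $C_0'^{-1}\alpha\le\|D^2 u(x)\|\le C_0'\alpha$ for a possibly larger universal constant (and, if one wants the two-sided matrix pinching rather than just the norm pinching, one uses that $\det D^2 u$ is bounded below together with the norm bound, exactly as in the proof of Lemma \ref{basic}); the set $\tilde S_\delta(0)\cap\tilde S_{t/h}(Ty)$ is the $T$-image of $S_{\delta h}(x_0)\cap S_t(y)$, so its measure equals $h^{-n/2}$ times the measure of $S_{\delta h}(x_0)\cap S_t(y)$. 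Multiplying the translated inequality through by $\alpha h^{n/2}$ then yields exactly the claimed bound, after absorbing all the dimensional comparison constants into a single new universal constant which we relabel $C_0$.

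The main obstacle is purely bookkeeping: one must track carefully how each of the three ingredients --- the height of a section, Lebesgue measure, and the operator norm / matrix pinching of the Hessian --- transforms under the affine rescaling $T = h^{-1/2}A$, and in particular make sure that the powers of $h$, the factors of $\alpha$, and the universal constants from \eqref{scal}, \eqref{A1}, \eqref{A2} are combined consistently so that the final inequality has the stated homogeneity. There is no new analytic content beyond Lemma \ref{basic}; the only mild subtlety is that the pinching constant $C_0$ on the right-hand side of Lemma \ref{basic_sc} need not be the same as the one produced by Lemma \ref{basic}, which is why the statement is phrased with a single universal $C_0$ that is allowed to be chosen large enough to absorb all the comparisons at once. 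One should also double-check that the engulfing-type hypothesis $x_0\in\overline{S_t}(y)$ is genuinely equivariant under $T$ --- it is, because $T$ is affine and maps sections of $u$ to sections of $\tilde u$ preserving inclusions and closures --- so that Lemma \ref{basic} is legitimately applicable to the rescaled configuration.
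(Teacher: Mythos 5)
Your argument is essentially the paper's own proof: rescale by $T=h^{-1/2}A$, apply Lemma \ref{basic} to $\tilde u$ (whose hypotheses are indeed inherited from $S_{2h}(x_0)\subset\subset\Omega$ and $x_0\in\overline{S_t}(y)$, $t\ge h$), and translate back using \eqref{A1}, \eqref{A2}, the identities $\tilde S_\delta(0)=T(S_{\delta h}(x_0))$ and $\tilde S_{t/h}(\tilde y)=T(S_t(y))$, and the Jacobian factor $|\det T|$. The only blemishes are side remarks that are not needed and in fact false in general: $\|A^{-1}\|$ need not be comparable to $\|A\|$ when $\det A=1$ and the section is eccentric, so $\|D^2\tilde u\|$ and $\alpha^{-1}\|D^2u\|$ are not two-sidedly comparable --- only the one-sided bound \eqref{A1} is used, and that is correct --- and likewise the parenthetical about recovering matrix pinching of $D^2u$ at scale $\alpha$ from the determinant lower bound would not work, but it is irrelevant since the lemma asserts only the norm pinching, which \eqref{A2} transfers with no loss in the constant.
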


\begin{proof}
The lemma follows by applying Lemma \ref{basic} to the rescaling $\tilde u$
defined in Section 2 (see \eqref{tilde}). More precisely, we notice first that by \eqref{A1} we have
$$\|D^2u(x)\| \le \alpha \|D^2 \tilde u(\tilde x)\|, \quad \quad \tilde x =Tx,$$ hence 
$$ |\det T| \, \, \, \int_{S_h(x_0)}\|D^2 u\| \, dx \le \alpha \int_{\tilde S_1(0)}\|D^2 \tilde u\| \,d \tilde x.$$
Also, by \eqref{A2} we obtain
$$\left \{C_0^{-1}I \le D^2 \tilde u \le C_0 I  \right \} \subset T\left ( \left \{C_0^{-1} \alpha \le \|D^2 u\| \le C_0 \alpha \right  \} \right ).$$
which together with 
$$\tilde S_\delta(0) =T(S_{\delta h}), \quad \tilde S_{t/h}(\tilde y)=T(S_t(y)),$$
implies that 
$$ \left | \left \{C_0^{-1} I \le D^2 \tilde u  \le C_0 I \right \} \cap \tilde S_{\delta }(0) \cap \tilde S_{t/h}(\tilde y) \right| $$
is bounded above by
$$ |\det T| \, \left | \left \{C_0^{-1} \alpha \le \|D^2u \| \le C_0 \alpha \right \} \cap S_{\delta h}(x_0) \cap S_t(y) \right|.$$
The conclusion follows now by applying Lemma \ref{basic} to $\tilde u$.
\end{proof}

Next we denote by $D_k$, $k \geq 0$, the closed sets 
\begin{equation}\label{Dk}
D_k:= \left \{ x \in \overline{S_1}(0): \, \|D^2u(x)\| \ge M^k  \right \},
\end{equation} for some large $M$.
As we show now, Lemma \ref{basic_sc} combined with a covering argument gives a geometric decay for $\int_{D_k}\|D^2 u\|$.

\begin{lem} \label{log}
If $M=C_2$, with $C_2$ a large universal constant, then
$$\int_{D_{k+1}} \|D^2u\| \, dx \le (1-\tau)\int_{D_k}\|D^2 u\| \, dx, $$
for some small universal constant $\tau>0$.
\end{lem}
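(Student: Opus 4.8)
The plan is to prove the geometric decay by covering $D_k$ with sections on which $\|D^2u\|$ is comparable to a dyadic value, applying Lemma~\ref{basic_sc} on each such section, and then summing. First I would decompose $D_k$ according to the normalized size of small sections: for each $x\in D_k$, since $u\in C^2$, the section $S_h(x)$ has normalized size comparable to $\|D^2u(x)\|\ge M^k$ for all small $h$. So fix a small $h=h(x)$ with $S_{2h}(x)\subset\subset\Omega$; by the engulfing property 3) and $0\in\overline{S_1}(x)$ (after centering, $x\in\overline{S_1}(0)$), one can check that $x\in\overline{S_t}(y)$ for a suitable $(t,y)$ with $t\ge h$ — indeed one takes $y=0$, $t$ a universal multiple of $1$ so that $\overline{S_1}(0)\subset\overline{S_t}(0)\subset\subset\Omega$, using that $S_2(0)\subset\subset\Omega$. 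Then Lemma~\ref{basic_sc} applies with $\alpha\approx\|D^2u(x)\|$.

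The key step: apply the Vitali covering Lemma~\ref{l3} to the compact set $D_k$ with these sections $S_{h(x)}(x)$, obtaining finitely many $S_{h_i}(x_i)$ covering $D_k$ with the $S_{\delta h_i}(x_i)$ pairwise disjoint. On each, Lemma~\ref{basic_sc} gives
$$\int_{S_{h_i}(x_i)}\|D^2u\|\,dx\le C_0\alpha_i\,\bigl|\{C_0^{-1}\alpha_i\le\|D^2u\|\le C_0\alpha_i\}\cap S_{\delta h_i}(x_i)\cap S_t(0)\bigr|.$$
Now I would choose $M=C_2$ large enough that $C_0\alpha_i\le C_0 M^{k+1}\le M^{k}\cdot(\text{something})$ is \emph{not} what we want; rather, the point is that the set on the right lies in $\{\|D^2u\|\ge C_0^{-1}\alpha_i\ge C_0^{-1}M^k\}$, and for $M\ge C_0^2$ (say) this is contained in $\{\|D^2u\|\ge M^{k-1}\}$ — but more usefully, since $\alpha_i\ge M^k$, the bound reads $C_0\alpha_i|E_i|$ where $E_i\subset S_{\delta h_i}(x_i)$ is the "good'' set where $D^2u$ is pinched near $\alpha_i$. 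Summing over $i$, using disjointness of the $S_{\delta h_i}(x_i)$ and that $\|D^2u\|\ge C_0^{-1}M^k$ on each $E_i$:
$$\int_{D_{k+1}}\|D^2u\|\,dx\le\sum_i\int_{S_{h_i}(x_i)}\|D^2u\|\,dx\le C_0\sum_i\alpha_i|E_i|\le C_0^2 M\!\sum_i\!\int_{E_i}\!\|D^2u\|\,dx / (C_0^{-1}M^k)\cdot M^k,$$
which I must arrange to give a genuine contraction. The clean way: on $E_i$ one has $\|D^2u\|\ge C_0^{-1}\alpha_i$, so $\alpha_i|E_i|\le C_0\int_{E_i}\|D^2u\|$, hence $\int_{S_{h_i}(x_i)}\|D^2u\|\le C_0^2\int_{E_i}\|D^2u\|$; since the $E_i$ are disjoint and all contained in $\{\|D^2u\|\ge C_0^{-1}M^k\}\cap S_1(0)$, provided $M$ is large enough that $C_0^{-1}M^k\ge M^{k-1}$, i.e. the $E_i$ sit inside $D_{k-1}\setminus D_{k+1}$ roughly, one gets a bound by $C_0^2\bigl(\int_{D_{k-1}}-\int_{D_{k+1}}\bigr)\|D^2u\|$ (the upper pinching $\|D^2u\|\le C_0\alpha_i\le C_0M^{k+?}$ is what keeps us away from the very top). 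Rearranging yields $\int_{D_{k+1}}\le\frac{C_0^2}{1+C_0^2}\int_{D_{k-1}}$; to reach consecutive indices one re-runs the argument noting $D_{k+1}\subset D_k$ and $\int_{D_{k+1}}\le(1-\tau)\int_{D_k}$ follows with $\tau=1/(1+C_0^2)$ after absorbing.

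The main obstacle I anticipate is bookkeeping the two-sided pinching $C_0^{-1}\alpha_i\le\|D^2u\|\le C_0\alpha_i$ to guarantee the good sets $E_i$ live strictly \emph{below} level $M^{k+1}$ yet \emph{above} a level $\ge M^{k-1}$ (or $\ge c M^k$), so that $\sum_i\int_{E_i}\|D^2u\|$ is controlled by $\int_{D_k}\|D^2u\|-\int_{D_{k+1}}\|D^2u\|$ with the right multiplicative constant; this forces the choice $M=C_2$ with $C_2\gg C_0$. A secondary point is verifying the hypothesis "$x_i\in\overline{S_t}(y)$ with $t\ge h_i$'' of Lemma~\ref{basic_sc} uniformly — handled by taking $y=0$ and $t$ a fixed universal constant (enlarging the ambient section), which is legitimate since $S_2(0)\subset\subset\Omega$ and the $h_i$ are small. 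Everything else is the standard Vitali-plus-disjointness summation.
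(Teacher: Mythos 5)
There is a genuine gap, and it is exactly at the point you flag as ``the main obstacle'' but never resolve: your covering uses sections $S_{h(x)}(x)$ with $h(x)$ so small that the normalized size $\alpha_i$ is comparable to $\|D^2u(x_i)\|$, a quantity that varies from point to point and can be arbitrarily large on $D_k$ (or $D_{k+1}$). With that choice the ``good'' pinched sets $E_i=\{C_0^{-1}\alpha_i\le\|D^2u\|\le C_0\alpha_i\}\cap S_{\delta h_i}(x_i)\cap S_1(0)$ sit at level $\approx\|D^2u(x_i)\|$, which may be far above $M^{k+1}$; there is no upper bound of the form $C_0\alpha_i\le C M^{k+1}$, so the $E_i$ need not lie in $D_k\setminus D_{k+1}$ (nor in $D_{k-1}\setminus D_{k+1}$), and in fact they may lie essentially inside $D_{k+1}$ itself. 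Consequently your summation only yields $\int_{D_{k+1}}\|D^2u\|\le C_0^2\sum_i\int_{E_i}\|D^2u\|\le C_0^2\int_{\{\|D^2u\|\ge C_0^{-1}M^k\}\cap S_1(0)}\|D^2u\|$, which cannot be rearranged into a contraction: the absorption step requires precisely that the right-hand side charge only the annulus $D_k\setminus D_{k+1}$, and that is what fails.

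The missing idea is to invert the selection: for each $x\in D_{k+1}$ (note the paper covers $D_{k+1}$, not $D_k$) one chooses the height $h$ so that $S_h(x)$ has the \emph{fixed} normalized size $\alpha=C_0M^k$, independent of how large $\|D^2u(x)\|$ is. This is possible by an intermediate-value argument: as $h\to 0$ the normalized size tends to $\|D^2u(x)\|\ge M^{k+1}>C_0M^k$ (here $u\in C^2$ is used), while at $h=\delta$ it is bounded by a universal constant, hence below $C_0M^k$ for $M$ large; moreover $S_h(x)\subset S_2(0)\subset\subset\Omega$, and $x\in\overline{S_1}(0)$ lets one apply Lemma~\ref{basic_sc} with $y=0$, $t=1$. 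With $\alpha=C_0M^k$ the pinched set is $\{M^k\le\|D^2u\|\le C_0^2M^k\}$, which for $M\ge C_0^2$ is contained in $D_k\setminus D_{k+1}$; then Vitali plus disjointness of the $S_{\delta h_i}(x_i)$ gives $\int_{D_{k+1}}\|D^2u\|\le C\int_{D_k\setminus D_{k+1}}\|D^2u\|$, and adding $C\int_{D_{k+1}}\|D^2u\|$ to both sides yields the lemma with $\tau=1/(1+C)$. Your proposal contains all the surrounding machinery (Vitali cover, disjointness, the lower-bound trick $\alpha_i|E_i|\le C_0\int_{E_i}\|D^2u\|$, absorption), but without the fixed-size selection of $h$ the two-sided pinching at scale $M^k$ is unavailable and the argument does not close.
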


\begin{proof} Let $M\gg C_0$ (to be fixed later), and
for each $x \in D_{k+1}$ consider a section $$S_h(x) \ \mbox{ of normalized size $\alpha=C_0 M^k$,}$$
which is compactly included in $S_2(0)$. This is possible since for $h \to 0$ the 
normalized size of $S_h(x)$ converges to $\|D^2u(x)\|$ (recall that $u \in C^2$) which is greater than $M^{k+1}> \alpha$,
whereas if $h=\delta$ the normalized size is bounded above by a universal constant and therefore by $\alpha$. 

Now we choose a Vitali cover for $D_{k+1}$ with sections $S_{h_i}(x_i)$, $i=1,\ldots,m$.
Then by Lemma \ref{basic_sc}, for each $i$,
$$\int_{S_{h_i}(x_i)} \|D^2 u\| dx \le C_0^2 M^k\, \left | \left \{M^k \le \|D^2u \| \le C_0^2 M^k\right \} \cap S_{\delta h_i}(x_i) \cap S_1(0) \right|.$$
Adding these inequalities and using  
$$D_{k+1}\, \subset \, \bigcup S_{h_i}(x_i), \quad \quad S_{\delta h_i}(x_i) \ \mbox{disjoint,}  $$ we obtain
\begin{align*}
\int_{D_{k+1}}\|D^2 u\| dx  & \le \,  C_0^2M^k \, \left | \left \{M^k  \le \|D^2u \| \le C_0^2 M^k\right \} \cap  S_1(0)\right | \\
& \le C \int_{D_k \setminus D_{k+1}}\|D^2 u\| dx
\end{align*}
provided $M \geq C_0^2$.
Adding $C \int_{D_{k+1}}\|D^2 u\|$ to both sides of the above inequality,
the conclusion follows with $\tau=1/(1+C)$.
\end{proof}

By the above result, the proof of \eqref{inteq} is immediate: indeed,
by Lemma \ref{log} we easily deduce that there exist $C,\eps>0$ universal such that 
$$
\int_{\{ x \in {S_1}(0): \, \|D^2u(x)\| \ge t  \}}\|D^2 u\| \, dx
\leq C t^{-2\eps}\qquad \forall \,t \geq 1.
$$
Multiplying both sides by $t^{-(1-\eps)}$ and integrating over $[1,\infty)$ we obtain
$$
\int_1^\infty  t^{-(1-\eps)} \int_{\{ x \in {S_1}(0): \, \|D^2u(x)\| \ge t   \}}\|D^2 u\| \, dx\,dt
\leq C\int_1^\infty t^{-1-\eps} =\frac{C}{\eps},
$$
and we conclude using Fubini.

\subsection{A proof by iteration of the $L\log L$ estimate}
We now briefly sketch how \eqref{inteq} could also be easily deduced by applying the $L\log L$ estimate 
from \cite{DF} inside every section, and then performing a covering argument.

First, any $K >0$ we introduce the notation
$$F_K:=  \{\|D^2 u \|\ge K\} \cap S_1(0).$$
\begin{lem}\label{cor}
Suppose $u$ satisfies the assumptions of Lemma \ref{basic}.
Then there exist universal constants $C_0$ and $C_1$ such that, for all $K \geq 2$,
$$|F_K| \le \, \, \frac{C_1}{K\log(K)} \,
\left | \left \{C_0^{-1}I \le D^2u \le C_0 I \right \} \cap S_\delta(0) \cap S_t(y) \right|. $$
\end{lem}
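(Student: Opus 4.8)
The plan is to mimic the proof of Lemma \ref{basic}, replacing the crude bound $\int_{S_1(0)}\|D^2u\|\,dx \le C_1$ by the sharper $L\log L$ estimate from \cite{DF}. Concretely, the $L\log L$ bound applied to the normalized solution $u$ in $S_1(0)$ (using that $S_2(0)\subset\subset\Omega$) gives
$$
\int_{S_1(0)} \|D^2u\|\,\log\bigl(2+\|D^2u\|\bigr)\,dx \le C_1,
$$
for some universal $C_1$. By Chebyshev, for $K\ge 2$ this yields $|F_K| = |\{\|D^2u\|\ge K\}\cap S_1(0)| \le C_1/\bigl(K\log K\bigr)$, which is the desired decay but without the localization to $S_\delta(0)\cap S_t(y)$ on the right-hand side.

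To get that localization I would recall from the proof of Lemma \ref{basic} two facts that only use the hypothesis $0\in\overline{S_t}(y)\subset\subset\Omega$ with $t\ge1$: first, the engulfing property (2) of Subsection \ref{eng} gives $S_\delta(0)\cap S_t(y)\supset S_{\delta^2}(z)$ for some $z$, hence $|S_\delta(0)\cap S_t(y)|\ge c_1$ universal; second, combining the $L^1$ bound $\int_{S_1(0)}\|D^2u\|\le C_1$ with this measure lower bound shows that $\{\|D^2u\|\le 2C_1 c_1^{-1}\}$ occupies at least measure $c_1/2$ inside $S_\delta(0)\cap S_t(y)$, and the lower bound $\det D^2u\ge\lambda$ upgrades this to $\{C_0^{-1}I\le D^2u\le C_0 I\}$ for $C_0$ large universal. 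Therefore
$$
\bigl|\{C_0^{-1}I\le D^2u\le C_0 I\}\cap S_\delta(0)\cap S_t(y)\bigr| \ge \frac{c_1}{2}.
$$
Dividing the Chebyshev estimate $|F_K|\le C_1/(K\log K)$ by this quantity, and renaming $2C_1/c_1$ as the new $C_1$, gives exactly the claimed inequality
$$
|F_K| \le \frac{C_1}{K\log K}\,\bigl|\{C_0^{-1}I\le D^2u\le C_0 I\}\cap S_\delta(0)\cap S_t(y)\bigr|.
$$

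The only genuinely substantive input is the $L\log L$ estimate of \cite{DF}; everything else is a repackaging of the elementary geometric lemmas already proved above. The one point that requires a little care is making sure the $L\log L$ bound is stated in the normalized setting with the correct dependence — i.e.\ that the constant $C_1$ is universal given $S_2(0)\subset\subset\Omega$ and $\sigma B_1\subset S_1(0)\subset\sigma^{-1}B_1$ — but this is precisely the form in which it appears in \cite{DF}, so no new work is needed. Thus I would expect the proof to be essentially a two-line deduction: cite the $L\log L$ bound, apply Chebyshev, then quote the measure lower bound from the proof of Lemma \ref{basic}.
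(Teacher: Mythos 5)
Your proof is correct and follows essentially the same route as the paper: quote the $L\log L$ estimate of \cite{DF} to get $|F_K|\le C/(K\log K)$, then use the lower bound $c_1/2$ on the measure of $\{C_0^{-1}I\le D^2u\le C_0 I\}\cap S_\delta(0)\cap S_t(y)$ from the proof of Lemma \ref{basic}. The only difference is that you spell out the Chebyshev step, which the paper leaves implicit in its citation of \cite{DF}.
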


Indeed, from the proof of Lemma \ref{basic} the measure of 
the set appearing on the right hand side is bounded below by a small universal constant
$c_1/2$, while by \cite{DF}
$|F_K| \le C/K\log(K)$ for all $K \geq 2$, hence
$$
|F_K| \leq \frac{2C}{c_1 K\log(K)} \left | \left \{C_0^{-1}I \le D^2u \le C_0 I \right \} \cap S_\delta(0) \cap S_t(y) \right|.
$$

Exactly as in the proof of Lemma \ref{basic_sc}, by rescaling we obtain:

\begin{lem} \label{cors}
Suppose $u$ satisfies the assumptions of Lemma \ref{basic_sc}. Then, 
$$|\{\|D^2 u \|\ge \alpha K\} \cap S_h(x_0)| \le \frac{C_1}{K\log(K)}    \left | \left \{C_0^{-1} \alpha \le \|D^2u\| \right \} \cap S_{\delta h}(x_0) \cap S_t(y) \right|,$$
for all $K \ge 2$.
\end{lem}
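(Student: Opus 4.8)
The plan is to deduce Lemma~\ref{cors} from Lemma~\ref{cor} by exactly the same rescaling device used to pass from Lemma~\ref{basic} to Lemma~\ref{basic_sc}. Let $A$ be a transformation realizing \eqref{scal} for the section $S_h(x_0)$, so that $S_h(x_0)$ has normalized size $\alpha=\|A\|^2$, and let $\tilde u$, $T$ be the associated rescaling \eqref{tilde}, with $T=h^{-1/2}A(\cdot-x_0)$ and $|\det T|=h^{-n/2}|\det A|=h^{-n/2}$. As recorded in Section~\ref{sec:not}, $\tilde u$ solves an equation in the same class, $\tilde S_1(0)=T(S_h(x_0))$ is normalized, and by \eqref{A1}--\eqref{A2} we have $\|D^2u(x)\|\le\alpha\|D^2\tilde u(\tilde x)\|$ and $\{C_0^{-1}I\le D^2\tilde u\}\subset T(\{C_0^{-1}\alpha\le\|D^2u\|\})$, while $\tilde S_\delta(0)=T(S_{\delta h}(x_0))$ and $\tilde S_{t/h}(\tilde y)=T(S_t(y))$.

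The key steps, in order, are as follows. First, verify that $\tilde u$ satisfies the hypotheses of Lemma~\ref{cor}: the assumption $x_0\in\overline{S_t}(y)$ with $t\ge h$ translates, under $T$, into $0\in\overline{\tilde S_{t/h}}(\tilde y)$ with $t/h\ge 1$, and $S_{2h}(x_0)\subset\subset\Omega$ translates into $\tilde S_2(0)\subset\subset\tilde\Omega$, which is precisely what is needed (with $\tilde t:=t/h$ playing the role of $t$). Second, apply Lemma~\ref{cor} to $\tilde u$: for every $K\ge 2$,
$$|\{\|D^2\tilde u\|\ge K\}\cap\tilde S_1(0)|\le\frac{C_1}{K\log K}\,\bigl|\{C_0^{-1}I\le D^2\tilde u\le C_0I\}\cap\tilde S_\delta(0)\cap\tilde S_{t/h}(\tilde y)\bigr|.$$
Third, push this forward through $T^{-1}$: since $\|D^2u(x)\|\le\alpha\|D^2\tilde u(\tilde x)\|$, the set $\{\|D^2u\|\ge\alpha K\}\cap S_h(x_0)$ is contained in the preimage under $T$ of $\{\|D^2\tilde u\|\ge K\}\cap\tilde S_1(0)$, so its measure is at most $|\det T|^{-1}$ times the left side above; on the right side, the inclusion $\{C_0^{-1}I\le D^2\tilde u\le C_0I\}\subset T(\{C_0^{-1}\alpha\le\|D^2u\|\})$ together with the identities for $\tilde S_\delta(0)$ and $\tilde S_{t/h}(\tilde y)$ shows that the bracketed set is contained in $T$ of $\{C_0^{-1}\alpha\le\|D^2u\|\}\cap S_{\delta h}(x_0)\cap S_t(y)$, hence has measure at most $|\det T|$ times that of the latter. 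The two factors $|\det T|^{\pm1}$ cancel, yielding exactly the claimed inequality.

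There is essentially no obstacle here: this is a bookkeeping lemma, and the only point requiring a modicum of care is matching up the heights of the sections under the rescaling — namely that $S_{\delta h}(x_0)$, $S_t(y)$, and $S_h(x_0)$ map respectively to $\tilde S_\delta(0)$, $\tilde S_{t/h}(\tilde y)$, and $\tilde S_1(0)$, so that the hypothesis $t\ge h$ is exactly what makes $\tilde t=t/h\ge1$ admissible in Lemma~\ref{cor}. Since the paper has already spelled out the analogous passage in the proof of Lemma~\ref{basic_sc}, the proof can simply be written as ``the lemma follows by applying Lemma~\ref{cor} to the rescaling $\tilde u$, exactly as in the proof of Lemma~\ref{basic_sc}'', with the determinant factors and set inclusions invoked verbatim from there.
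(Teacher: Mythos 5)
Your proposal is correct and coincides with the paper's own (deliberately terse) proof: Lemma~\ref{cors} is obtained by applying Lemma~\ref{cor} to the rescaling $\tilde u$ from \eqref{tilde} and transferring the two sets back via \eqref{A1}--\eqref{A2}, the identities $\tilde S_1(0)=T(S_h(x_0))$, $\tilde S_\delta(0)=T(S_{\delta h}(x_0))$, $\tilde S_{t/h}(\tilde y)=T(S_t(y))$, and cancellation of the $|\det T|^{\pm 1}$ factors, exactly as in the proof of Lemma~\ref{basic_sc}. Your bookkeeping of the heights (so that $t\ge h$ gives $t/h\ge 1$) matches the paper's intent, so nothing further is needed.
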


Finally, as proved in the next lemma, a covering argument shows that the measure of the sets $D_k$ defined in \eqref{Dk}
decays as $M^{-(1+2\eps)k}$, which gives \eqref{inteq}.
\begin{lem}\label{fin} There exist universal constants $M$ large and $\eps>0$ small such that
$$|D_{k+1}| \le M^{-1-2\eps}|D_k| .$$
\end{lem}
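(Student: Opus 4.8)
The plan is to mimic the proof of Lemma~\ref{log}, but now tracking the \emph{measure} of the super-level sets rather than the truncated energy. Fix $k\ge 0$. For each $x\in D_{k+1}$ we have $\|D^2u(x)\|\ge M^{k+1}$, so exactly as in Lemma~\ref{log} we may pick a section $S_h(x)\subset\subset S_2(0)$ of normalized size $\alpha=C_0M^k$ (this is possible by the same continuity-in-$h$ argument: the normalized size tends to $\|D^2u(x)\|>M^{k+1}>\alpha$ as $h\to0$ and is below a universal constant at $h=\delta$). I then extract a Vitali cover $S_{h_i}(x_i)$, $i=1,\dots,m$, of $D_{k+1}$ with the $S_{\delta h_i}(x_i)$ pairwise disjoint, and I check that the hypotheses of Lemma~\ref{cors} hold for each of these sections with $y$ the center of the ambient section (here we use that $0\in\overline{S_1}(0)$ and the engulfing properties so that each $x_i$ lies in a section $S_t(y)$ with $t\ge h_i$; concretely take $y=0$, $t=1$, which is legitimate since $S_{h_i}(x_i)\subset S_2(0)$ and $x_i\in\overline{S_1}(0)$).

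Next I apply Lemma~\ref{cors} on each $S_{h_i}(x_i)$ with the choice $K=M/C_0$, so that $\alpha K=C_0M^k\cdot(M/C_0)=M^{k+1}$. This gives
$$|D_{k+1}\cap S_{h_i}(x_i)|\le |\{\|D^2u\|\ge M^{k+1}\}\cap S_{h_i}(x_i)|\le \frac{C_1}{K\log K}\,\bigl|\{C_0^{-1}\alpha\le\|D^2u\|\}\cap S_{\delta h_i}(x_i)\cap S_1(0)\bigr|.$$
Since $C_0^{-1}\alpha=M^k$, the set on the right is contained in $D_k\cap S_{\delta h_i}(x_i)$. Summing over $i$ and using that the $S_{\delta h_i}(x_i)$ are disjoint and $D_{k+1}\subset\bigcup_i S_{h_i}(x_i)$, I obtain
$$|D_{k+1}|\le \sum_i |D_{k+1}\cap S_{h_i}(x_i)|\le \frac{C_1}{K\log K}\,|D_k|.$$

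It remains to absorb the logarithmic factor into the required power. With $K=M/C_0$, the prefactor is $\dfrac{C_1 C_0}{M\log(M/C_0)}$. Given a target $\eps>0$, I want $\dfrac{C_1C_0}{M\log(M/C_0)}\le M^{-1-2\eps}$, i.e. $M^{2\eps}\le\dfrac{\log(M/C_0)}{C_1C_0}$. For fixed $M$ this holds once $\eps$ is small enough provided $\log(M/C_0)>C_1C_0$, which is arranged by first taking $M$ large universal (larger than $e^{C_1C_0}C_0$, and also $\ge C_0^2$ and $\gg C_0$ as the earlier lemmas demand), and then choosing $\eps=\eps(n,\lambda,\Lambda)>0$ small enough that $M^{2\eps}\le\log(M/C_0)/(C_1C_0)$. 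This yields $|D_{k+1}|\le M^{-1-2\eps}|D_k|$ as claimed. The only mildly delicate point is the bookkeeping in verifying the hypotheses of Lemma~\ref{cors} for the covering sections — specifically exhibiting the auxiliary section $S_t(y)$ containing $x_i$ with $t\ge h_i$ — but this is exactly the role played by the engulfing properties in Subsection~\ref{eng}, as in Lemma~\ref{log}, and there is no real obstacle.
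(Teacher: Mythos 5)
Your proposal is correct and follows essentially the same route as the paper's proof: a Vitali cover of $D_{k+1}$ by sections of normalized size $\alpha=C_0M^k$ (with $y=0$, $t=1$ as the ambient section), Lemma~\ref{cors} applied with $K$ chosen so that $\alpha K=M^{k+1}$, summation over the disjoint sections $S_{\delta h_i}(x_i)$, and absorption of the factor $\frac{C}{K\log K}$ into $M^{-1-2\eps}$ by taking $M$ large and then $\eps$ small. The only differences are cosmetic bookkeeping in how $M$ and $\eps$ are fixed (the paper takes $M\ge e^{4C_0}$ and $\eps$ of order $1/\log M$), so there is nothing further to add.
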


\begin{proof}
As in the proof of Lemma \ref{log},
we use a Vitali covering of the set $D_{k+1}$ with sections $S_h(x)$ of
normalized size $\alpha=C_0 M^k$, i.e.
$$D_{k+1}\, \subset \, \bigcup S_{h_i}(x_i), \quad \quad \mbox{$S_{\delta h_i}(x_i)$ disjoint sets.}$$ 
We then apply Lemma \ref{cors} above with $$K:=C_0^{-1}M,$$ so that $\alpha K=M^{k+1}$, and find that for each $i$
$$|D_{k+1}\cap S_{h_i}(x_i)| \le \frac{2C_0}{M\log(M)}  |D_k \cap S_{\delta h_i}(x_i)|,$$
provided that $M \gg C_0$.
Summing over $i$ and choosing $M \geq e^{4C_0}$ we get
$$ |D_{k+1}| \le \frac{2C_0}{M\log(M)}  |D_k|\leq  \frac{1}{2M} |D_k|,$$
and the lemma is proved by choosing $\eps=\log(2)/\log(M)$.
\end{proof}

\subsection{More general measures}\label{last sec}
It is not difficult to check that our proof applies to more general right hand sides.
Precisely we can replace $f$ by any measure $\mu$ of the form
\begin{equation}
\label{eq:mu}
\mu=\sum_{i=1}^N g_{i}(x) |P_i(x)|^{\alpha_i} \, dx, \quad \quad 0 < \lambda \le g_i \le \Lambda, \quad \text{$P_i$ polynomial,} \quad \alpha_i \geq 0.
\end{equation}
We state the precise estimate below.

\begin{thm}\label{thm2}
Let $u: \overline \Omega \to \R$,
\begin{equation*}
u=0 \quad \mbox{on $\p \Omega$}, \quad \quad B_1\subset \Omega \subset B_n,
\end{equation*}
be a continuous convex solution to the Monge-Amp\`ere equation
$$
\det D^2 u=\mu \quad \mbox{in $\Omega$}, \quad \quad \mu(\Omega) \le 1,
$$
with $\mu$ as in \eqref{eq:mu}. Then 
$$\|u\|_{W^{2,1+\eps}(\Omega')} \le C, \quad \quad \mbox {with} \quad \Omega':=\{ u< -\|u\|_{L^\infty}/2\} ,$$
where $\eps,C>0$ are universal constants.
\end{thm}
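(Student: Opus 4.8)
The plan is to mimic exactly the structure of the proof of Theorem \ref{thm}, replacing the bounded density $f$ with the more general density appearing in \eqref{eq:mu}, and checking that every step where the bounds $\lambda \le f \le \Lambda$ were used survives. The crucial observation is that a nonnegative polynomial $|P_i(x)|^{\alpha_i}$, and more generally any finite sum as in \eqref{eq:mu}, behaves under the affine rescalings \eqref{tilde} almost as well as a constant: if $A$ normalizes a section $S_h(x_0)$ with $\det A = 1$, then $|P_i(T^{-1}\tilde x)|^{\alpha_i}$ is again, up to a multiplicative constant depending only on $\deg P_i$ and the relative position of the section, comparable on $\tilde S_1(0)$ to a single polynomial of the same form. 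The point is that a polynomial does not have arbitrarily large oscillation on a single section relative to its average there; this is the quantitative fact established in \cite{DF} that makes the $L\log L$ estimate hold for such $\mu$ in the first place. So the first step is to record that the class of measures \eqref{eq:mu}, normalized so that $\mu(S_1(0))$ is bounded above and below by universal constants, is stable under the rescaling \eqref{tilde}, with the number $N$ of terms and the degrees bounded throughout.

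Next I would redo Lemma \ref{basic} in this generality. The two ingredients there are (a) the interior Lipschitz estimate for $u$ on $S_2(0)$, which gives $\int_{S_1(0)}\|D^2u\|\,dx \le \int_{\partial S_1(0)} u_\nu \le C_1$ — this uses only $\mu(\Omega)\le 1$ and the normalization, not a density lower bound; and (b) the lower bound $C_0^{-1}I \le D^2 u$ on the set $\{\|D^2 u\|\le 2C_1 c_1^{-1}\}$, which in the original proof came from $\det D^2 u \ge \lambda$. Here we instead have $\det D^2 u \ge \lambda\, |P_i(x)|^{\alpha_i}$ term by term, so the lower eigenvalue bound degenerates precisely on the zero sets of the $P_i$. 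The fix is standard: one works on a slightly smaller section, away from a neighborhood of $\bigcup\{P_i=0\}$, or uses that $|P_i|^{\alpha_i}$ is bounded below by a universal positive constant on a sub-section of definite size — again because a polynomial cannot be uniformly tiny on a whole section unless it is uniformly tiny, and by the normalization we can always find a sub-section $S_{\delta^2}(z)\subset S_\delta(0)\cap S_t(y)$ where $\mu$ has comparable-to-universal density. So Lemma \ref{basic} holds verbatim, with $C_0$ now also depending on the degrees of the $P_i$, which are universal under the normalization. Its rescaled version Lemma \ref{basic_sc}, Lemma \ref{cor}, and Lemma \ref{cors} then follow by the identical rescaling computations, using the stability noted in the first step, and invoking the $L\log L$ bound $|F_K|\le C/(K\log K)$ from \cite{DF}, which is already proved there for exactly these measures.

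With those lemmas in hand, the decay Lemma \ref{log} (for the direct proof) or Lemma \ref{fin} (for the iteration proof) goes through word for word: pick $M$ large universal, take a Vitali cover of $D_{k+1}$ by sections of normalized size $\alpha = C_0 M^k$, apply the rescaled estimate on each, sum using disjointness of the $\delta$-dilated sections, and absorb. This yields $\int_{\{\|D^2u\|\ge t\}\cap S_1(0)}\|D^2u\|\,dx \le C t^{-2\eps}$, hence \eqref{est} by the same Fubini argument, and finally Theorem \ref{thm2} by the covering argument based on the engulfing property of sections, exactly as for Theorem \ref{thm}. The main obstacle is the one flagged above — verifying the affine stability of the density class \eqref{eq:mu} and the non-degeneracy of $\det D^2 u$ on sub-sections — i.e. making precise that a polynomial's oscillation over a single normalized section is controlled; everything else is a transcription. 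I would state this stability as a short preliminary lemma and then remark that Lemmas \ref{basic}--\ref{fin} hold with $f$ replaced by $\mu$, concluding Theorem \ref{thm2}.
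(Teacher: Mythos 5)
Your overall strategy---rerun the proof of Theorem \ref{thm} and isolate where the bounds $\lambda\le f\le\Lambda$ were used---is indeed the paper's strategy, but the transcription is not as automatic as you claim, and two concrete steps fail as written. First, the scaling structure itself breaks down for $\mu$ as in \eqref{eq:mu}: one no longer has $|S_h(x_0)|\sim h^{n/2}$, only $|S_h(x_0)|\,\mu(S_h(x_0))\sim h^n$, so in general there is \emph{no} volume-preserving $A$ satisfying \eqref{scal}, and the rescaling \eqref{tilde} with the factor $h^{-1/2}$ does not normalize $S_h(x_0)$ to unit size. Hence ``the identical rescaling computations'' behind Lemma \ref{basic_sc}, and the selection of covering sections of normalized size $\alpha=C_0M^k$ in Lemma \ref{log}, do not go through verbatim: one must replace \eqref{scal} by the inclusion with radius $h\,\mu(S_h(x_0))^{-1/n}$, take $Tx=h^{-1}\mu(S_h(x_0))^{1/n}A(x-x_0)$, and redefine the normalized size as $\alpha=h^{-1}\mu(S_h(x_0))^{2/n}\|A\|^2$. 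This $\mu$-adapted normalization is the actual content of the extension and is absent from your plan. Relatedly, the engulfing and Vitali properties of sections that you use throughout are Caffarelli--Gutierrez results requiring $\mu$ to be doubling; you invoke them without comment, whereas the paper notes that the quantitative nondegeneracy \eqref{eq:property} of measures of the form \eqref{eq:mu} is what guarantees doubling.

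Second, your substitute for the lower eigenvalue bound in Lemma \ref{basic} is both vaguer and stronger than necessary. You assert the existence of an entire sub-section $S_{\delta^2}(z)\subset S_\delta(0)\cap S_t(y)$ on which the density of $\mu$ is bounded below by a universal constant; this pointwise statement on a full sub-section is not justified (the zero set of some $P_i$ may cut through any candidate sub-section) and is not what is needed. The needed statement is measure-theoretic: applying \eqref{eq:property} with $S=S_1(0)$ and $E=\{\det D^2u\le c_2\}$ gives $\gamma|E|^\beta\le C\mu(E)\le Cc_2|E|$, so $|E|$ is small once $c_2$ is small; then $\{\|D^2u\|\le 2C_1c_1^{-1}\}\cap\{\det D^2u>c_2\}$ has measure at least $c_1/4$ and the bounds $C_0^{-1}I\le D^2u\le C_0I$ follow there. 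Moreover \eqref{eq:property} is affinely invariant, which is exactly what makes the rescaled Lemma \ref{basic_sc} hold with universal constants---your ``stability of the class \eqref{eq:mu}'' remark does not by itself provide uniform constants under rescaling. Finally, your appeal to \cite{DF} for the $L\log L$ estimate ``for exactly these measures'' is unsupported (\cite{DF} assumes $\lambda\le f\le\Lambda$); this only affects your second route, but it should not be cited as established. In short, the idea is the right one, but the $\mu$-adapted normalization, the doubling/engulfing verification, and a precise nondegeneracy statement such as \eqref{eq:property} are genuine missing ingredients in your proposal.
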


The proof follows as before, based on the fact that for $\mu$ as above one can prove the existence
of constants $\beta\geq 1$ and $\gamma>0$,
such that, for all convex sets $S$,\footnote{Although this will not be used here,
we point out for completeness that \eqref{eq:property} is equivalent to the so-called \emph{Condition $(\mu_\infty)$}, first introduced
by Caffarelli and Gutierrez in \cite{CG}. Indeed, using \eqref{eq:property} with $E=S\setminus F$
one sees that $|F|/|S| \ll 1$ implies $\mu(F)/\mu(S) \leq 1-\gamma/2$,
and then an iteration and
covering argument in the spirit of \cite[Theorem 6]{CG}
shows that \eqref{eq:property} is actually equivalent to \emph{Condition $(\mu_\infty)$}.}
\begin{equation}
\label{eq:property}
\frac{\mu(E)}{\mu(S)} \ge \gamma \left(\frac{|E|}{|S|}\right)^\beta\qquad \forall\,E\subset S,
\end{equation}

In this general situation, we need to write the scaling properties of $u$ with respect to the measure $\mu$.
More precisely the scaling inclusion \eqref{scal} becomes
$$\sigma \, h \,\mu(S_h(x_0))^ {- \frac 1n} \, \, B_1  \, \subset \, A(S_h(x_0)-x_0) \, \subset \, \sigma^{-1}   \, h \, \mu(S_h(x_0))^ {- \frac 1n} \, \, B_1,$$
and
$$Tx:=  h^{-1} \mu(S_h(x_0))^\frac 1n \, A \, (x-x_0). $$
Also we define the \textit{normalized size} $\alpha$ of $S_h(x_0)$ (relative to the measure $\mu$) as 
$$\alpha:=  h^{-1} \mu(S_h(x_0))^\frac 2n \, \|A\|^2.$$
With this notation the statements of the lemmas in Section 3 apply as before.

Indeed, first of all we observe that \eqref{eq:property} implies that $\mu$ is doubling,
so all properties of sections stated in Section \ref{eng}
still hold.

Then, in the proof of Lemma \ref{basic}, we simply apply
\eqref{eq:property} with $S=S_1(0)$ and $E=\{\det(D^2u) \leq c_2\}$ ($c_2>0$ small) to deduce that
$$
\gamma |E|^\beta \leq C \mu(E)=C\int_E \det(D^2u)\leq c_2 |E|.
$$
This implies that, if $c_2>0$ is sufficiently small,
the set
$$
\left\{\|D^2u\| \leq 2C_1c_1^{-1}\right\}\cap \left\{\det(D^2u) > c_2\right\}
$$
has at least measure
$c_1/4$, and the result follows as before.

Moreover, since \eqref{eq:property} is affinely invariant, Lemma \ref{basic_sc}
follows again from Lemma \ref{basic} by rescaling.
Finally, the proof of Lemma \ref{log} is identical.

\end{document}